\acrodef{MPT}{Multi Parametric Toolbox}
\begin{document}

\def\figurescale{.7}

\title{Efficient evaluation of mp-MIQP solutions using lifting}

  \author{Alexander Fuchs
\thanks{A. Fuchs is with the Research Centre for Energy Networks, Swiss Federal Institute of Technology (ETH), 8092 Zurich, Switzerland. E-mail: fuchs@fen.ethz.ch .}, 
       Daniel Axehill
\thanks{D. Axehill is with the Division of Automatic Control, Link\"oping University, 58183 Link\"oping, Sweden. E-mail: daniel@isy.liu.se .}, 
           Manfred Morari 
\thanks{M. Morari is with the Automatic Control Laboratory, Swiss Federal Institute of Technology (ETH), 8092 Zurich, Switzerland. E-mail: morari@control.ee.ethz.ch .}
  }
  
\maketitle

\begin{abstract}
  This paper presents an efficient approach for the evaluation of multi-parametric mixed integer quadratic programming (mp-MIQP) solutions, 
  occurring for instance in control problems involving discrete time hybrid systems with quadratic cost.
  Traditionally, the online evaluation requires a sequential comparison of piecewise quadratic value functions.  As the main contribution, we introduce a lifted parameter space in which the piecewise quadratic value functions
  become piecewise affine and can be merged to a single value function defined over a single polyhedral partition without any overlaps.  This enables efficient  point location approaches using a single binary search tree.
  Numerical experiments include a power electronics application and demonstrate an online speedup up to an order of magnitude.  We also show how  the achievable online evaluation time can be traded off against the offline computational time.
\end{abstract}

\begin{keywords}
Explicit MPC; Control of discrete time hybrid systems; Control of constrained systems
  \end{keywords}

\section{Introduction}
\subsection{Background and motivation}

The main motivation for this work is control of discrete-time hybrid systems \cite{heemels01:_equiv_hybrid_dynam_model, lunze09:_handb_hybrid_system_contr, bemporad99:_contr_system_integ_logic_dynam_const}
 using Model Predictive Control (MPC) with quadratic cost \cite{maciejowski01:_predic_contr_const,rawlings09:_model_predic_contr}. 
The paper considers the parametric solution to the MPC problem, where the optimal control is computed offline for a set of initial states to reduce the online computational effort \cite{bank82:_non_linear_param_optim,pistikopoulos00:_optim_off_param_optim_tools,bemporad02:_model_predic_contr_based_linear_progr}.
 For hybrid  systems with quadratic cost, 
 the offline computation then requires to solve a multi-parametric Mixed Integer Quadratic Programming (mp-MIQP) problem \cite{borrellimoraribook}. 
Solutions to mp-MIQPs have been proposed based on the solution of Mixed Integer Nonlinear Programming problems \cite{dua02:_multip_progr_approac_mixed_integ}, 
on the enumeration of all switching sequences \cite{borrellimoraribook}, 
on dynamic programming \cite{baotic05:_optim_contr_piecew_affin_system},
and on parametric branch and bound \cite{axehill10_subop_automatica}. 

For efficient evaluation,  a parametric solution needs to be stored in a suitable data structure.
The evaluation approaches in \cite{tondel03:_evaluat,JonGri:2006:IFA_2452,WanJon:2007:IFA_3054}  are focused on solutions with non-overlapping polyhedral partitions, 
    which can be computed for mp-LP, mp-QP or mp-MILP problems \cite{kvasnica04:_multi_param_toolb_mpt}.
   This covers the MPC problem classes of  linear system with linear or quadratic cost and hybrid systems with linear cost. 
Efficient data structures for mp-MIQP problems, occurring for  hybrid systems with quadratic cost, is the main topic of this paper and a more or less unexplored field.
The reason is  that the solution is a pointwise minimizer of intersecting piecewise quadratic functions on overlapping polyhedral partitions.
Therefore, the boundary between optimal regions is not only defined by hyperplanes but also, in general,  by quadratic surfaces. 
The approach in \cite{ChrEtal:ecc:07,FucAxe:2010:IFA_3656} can be used with arbitrary functions defined on overlapping polyhedral partitions,
    but requires  an online sequential search to compare all of the potentially many overlapping value functions defined for the given parameter vector.

\subsection{Contributions}

The evaluation of mp-MIQP solutions defined over \emph{multiple overlapping polyhedral partitions} is traditionally performed in two steps \cite{borrellimoraribook}. 
First, for each partition, the region  containing the parameter vector is determined using a binary search tree \cite{tondel03:_evaluat}. 
Then, the optimal partition is determined using a sequential comparison of the value functions associated with the selected regions. 

The main contribution in this paper is  to  show how  mp-MIQP solutions can be lifted and then merged to an equivalent piecewise affine function defined over a \emph{single polyhedral partition without overlaps}.  
This has a direct impact on the online evaluation time,  which can be significantly reduced using a single search tree eliminating the need for the additional sequential search.

The reason for the  significant reduction of the evaluation time is that the complexity of a search tree evaluation depends logarithmically  on the number of regions in the partition. 
As a result, the evaluation of a single larger search tree for the merged partition requires fewer operations than the evaluation of multiple search trees and the sequential function comparison for the original partitions.

The merging of the lifted partitions can be performed with a standard method for mp-MILP solutions \cite{kvasnica04:_multi_param_toolb_mpt}.
Using the   proposed lifting procedure, the method becomes available for any mp-MIQP solution represented by piecewise quadratic functions over overlapping polyhedral partitions. 
This means that the merging can immediately be combined with any of the above listed solution methods for
mp-MIQP problems,  and also to suboptimal solutions as those computed by the algorithm in~\cite{axehill10_subop_automatica}.

A second contribution of this paper is a new partial merging algorithm. It enables a trade-off between the online and offline complexity of the evaluation of both mp-MILP and mp-MIQP solutions.

\subsection{Paper organization}
The remainder of the paper is organized as follows. 
\sref{probform} defines the evaluation problem of mp-MIQP solutions.
\sref{liftingsols} introduces a lifting procedure and a reformulation of the mp-MIQP solution which enables efficient online evaluation and is the main result of the paper. 
\sref{evalsols}  presents the offline and online algorithm for the evaluation of mp-MIQP solutions and their complexity. 
\sref{partialsols} introduces an algorithm that enables a trade-off between the offline and online complexity for the evaluation of mp-MILP and mp-MIQP solutions. 
\sref{exp} applies the algorithms to three mp-MIQP examples,  showing a reduction of the online evaluation time up to an order of magnitude compared to the traditional evaluation approach.
\sref{conclusion} concludes the paper.

\section{Evaluation problem formulation}
\label{sec:probform}
This section introduces the definitions used to characterize mp-MIQP solutions and states the corresponding evaluation problem.
\subsection{Definitions}
\label{sec:defs}
  \begin{def1}  
  A \emph{polyhedron} $\PP$ in $\Set{R}^n$, is an intersection of a finite number of half-spaces, given in inequality form 
   with $H \in \Set{R}^{m\times n}$ and  $K \in \Set{R}^m$ as
  \begin{equation}
  \Poly{P} = \left\{x \in \Set{R}^n   : \quad Hx\leq K \right\} \quad .
\label{eq:polydef}
  \end{equation}
  \end{def1}  
  \begin{def1}  
  Two polyhedra $\PP_1$ and $\PP_2$ in $\Set{R}^n$ are called \emph{overlapping} when they have common interior points, i.e.
  \begin{equation}
\exists \ x \in \R{n} : \quad H_1x<K_1 \ , \quad H_2x < K_2\qquad .
  \end{equation}
  \end{def1}  

  \begin{def1}  
A \emph{polyhedral set} $\Set{P}$ in $\Set{R}^n$ is a finite collection 
$\Set{P}   = \left\{\PP_{1},\PP_{2},...,\PP_{N}\right\}$
of $|\Set{P}| = N$ polyhedra in $\Set{R}^n$.
The  $i$'th polyhedron  is referred to as $\Set{P}[i] = \PP_i$. 
  \end{def1}  

  \begin{def1}  
A \emph{polyhedral partition} $\PaP$ in $\Set{R}^n$ is a 
polyhedral set in $\Set{R}^n$ whose polyhedra  are not overlapping.
  \end{def1}  

  \begin{def1}  
    The \emph{index set} $\mathcal{I}_\Set{P}(x)$   of a  polyhedral set $\Set{P}$ with $N$ elements in $\Set{R}^n$,  
    and a vector $ x \in \Set{R}^{n}$ 
    is  given by
\begin{equation}
\mathcal{I}_\Set{P}( x) = \{i\in \{1,2,...,N\}  :   x \in \Set{P}[i]\} \quad .
\end{equation}
  \end{def1}

  \begin{def1}  
  \label{def:J}
A \emph{set of quadratic functions} $\Set{J}$ in $\Set{R}^n$ is a finite collection 
$\Set{J}   = \left\{J_{1}(\cdot),J_{2}(\cdot),...,J_{N}(\cdot)\right\}$, 
\begin{equation}
J_i: \Set{R}^n \rightarrow \Set{R},  \qquad J_i( x)   =  x^TA_ix + B_i^Tx + C_i\quad , \label{eq:jpi}
\end{equation}
with $A_i = A_i^T \in \Set{R}^{n\times n}$,  $B_i \in \Set{R}^n$ and  $C_i \in \Set{R}$. 
The  $i$'th quadratic function of $\Set{J}$ is referred to as $\Set{J}[i](\cdot) = J_i(\cdot)$. 
If all $A_i$ are zero matrices, $\Set{J}$ is referred to as \emph{set of affine functions}.
  \end{def1}  

  \begin{def1}  
  A \emph{piecewise quadratic function} $J_{\Set{P},\Set{J}}(\cdot)$ in $\Set{R}^n$ over a polyhedral set $\Set{P}$ in $\Set{R}^n$ with a set of quadratic functions $\Set{J}$ in $\Set{R}^n$
  is a map  
\begin{equation}
J_{\Set{P},\Set{J}}: \Set{R}^n \rightarrow \Set{R}  , \qquad  J_{\Set{P},\Set{J}}(x)  = \min_{i\in\mathcal{I}_\Set{P}(x)}\Set{J}[i](x) \quad .\label{eq:vf}
\end{equation}
If $\Set{J}$ is a set of affine functions, $J_{\Set{P},\Set{J}}(\cdot)$ is referred to as \emph{piecewise affine function}.
  \end{def1}

\subsection{Evaluation problem for mp-MIQP solutions}
\label{sec:plmiqp}
The definitions in \sref{defs} are used to characterize  mp-MIQP solutions: 
\begin{lem1}
The value function of an mp-MIQP problem can be represented as a piecewise quadratic function. 
The value function of an mp-MILP problem can be represented as a piecewise affine function. 
\end{lem1}
\begin{proof}
See, for instance,   \cite{borrellimoraribook}. 
The polyhedral set $\Set{P}$ then consists of multiple overlapping polyhedral partitions, each corresponding to a fixed value of the problem's integer variables.
\end{proof}
The evaluation problem of mp-MIQP solutions requires the solution to the minimization problem in \eref{vf}, 
    which is a point location problem in combination with pairwise comparisons of quadratic functions: 
  \begin{def1}
  \label{def:plmiqp}
   \emph{(PL-MIQP)}:
   Given a 
   piecewise quadratic function $J_{\Set{P},\Set{J}}(\cdot)$ 
   and a vector $x$ in $\R{n}$, determine an index $i^* \in \mathcal{I}_\Set{P}(x)$
   such that 
   \begin{equation}
   \forall i \in \mathcal{I}_\Set{P}(x): \quad \Set{J}[i^*](x)\leq \Set{J}[i]( x) \qquad .
   \end{equation}
If $J_{\Set{P},\Set{J}}(\cdot)$ is a piecewise affine function, the problem is referred to as {\emph{(PL-MILP)}}.
  \end{def1}  
In words, among the polyhedra containing the vector, identify the one with the smallest associated function value. 
\begin{rem1}
The solution  of (PL-MIQP) allows to extract the mp-MIQP optimizer associated with the optimal region index $i^*$ of the given parameter vector $x$. 
It also yields the evaluated value function, 
   \begin{equation}
   \quad J_{\Set{P},\Set{J}}(x) = \Set{J}[i^*](x)\qquad. 
   \end{equation}
\end{rem1}

\section{Lifting mp-MIQP solutions}
\label{sec:liftingsols}
This section presents a lifting procedure for piecewise quadratic functions, which is the main contribution of the paper.
It shows that mp-MIQP value functions have an equivalent piecewise affine representation, thereby enabling efficient evaluation schemes.

\subsection{Motivation of the lifting procedure}

   The \ac{MPT} \cite{kvasnica04:_multi_param_toolb_mpt} provides algorithms to construct data structures for the efficient solution of problem (PL-MILP). 
   For that case, the description of the piecewise affine function $J_{\Set{P},\Set{J}}(\cdot)$ is merged to a function defined over a single polyhedral partition, enabling the construction of a binary search tree  \cite{tondel03:_evaluat} for fast online evaluation. 
These algorithms can however not be directly applied to solve (PL-MIQP), the case with \emph{quadratic} terms, since $J_{\Set{P},\Set{J}}(\cdot)$  
 is then a non-convex piecewise quadratic function defined on regions that,  in general,  are partially defined by quadratic boundary constraints.

It will now be shown how an mp-MIQP solution can be lifted to a piecewise affine formulation in a higher dimensional parameter space. 
The lifted formulation after this transformation is shown to be equivalent to the original formulation.    
Furthermore, the lifted formulation has the structure of an mp-MILP solution, making the standard state-of-the-art methods designed for (PL-MILP) problems available to (PL-MIQP) problems.

\subsection{Definition of the lifting procedure}

\begin{def1}
\label{def:lift}
The \emph{lifting transformation} $L(\cdot)$ of  $\R{n}$,    is defined as 
    \begin{align}
        L: \R{n} \rightarrow  \R{l} \quad,  \qquad l = \frac{n^2 + 3n}{2}  \quad , \label{eq:dimlift}\\
          L(x) = [x_1, x_2,...,x_n, \  x_1^2 , x_1x_2, ..., x_1x_n,\nonumber \\
                 \qquad\qquad  \qquad x_2^2,x_2x_3, ...,  x_2x_n, \   ..., x_n^2 ]^T \quad .
    \label{eq:Ldef}
    \end{align}
\end{def1}

\begin{def1}
\label{def:liftp}
Given a polyhedral set $\Set{P}$ in $\R{n}$, the \emph{lifted polyhedral set} \textnormal{$\Set{P}_\text{l} = \frak{L_P}(\Set{P})$} in 
$\R{l}$,  $l = (n^2 + 3n)/2$, is defined as 
\textnormal{
\begin{equation}
\forall i \in   \{1,...,  |\Set{P}|\} : \quad 
\Set{P}_{\text{l}}[i]  = \left\{y \in \Set{R}^l   : [H_i, \mathbf{0}]y\leq K_i \right\} 
\end{equation}
}
where $(H_i,K_i)$ are the matrices defining 
\begin{equation}
\Set{P}[i] = \left\{x \in \Set{R}^n   : H_ix\leq K_i \right\}
\end{equation}
and $\mathbf{0}$ denotes the zero matrix of appropriate dimensions. 
\end{def1}

\begin{rem1}
Through the lifting, the polyhedra $\Set{P}[i]$ are extended along the dimensions  of the lifted space corresponding to the bilinear terms in \eref{Ldef}.
The lifting of the polyhedra does not add constraints or change the structure of their projection on the original $n$ dimensions. 
\end{rem1}

\begin{def1}
\label{def:liftj}
Given a set of quadratic functions $\Set{J}$ in $\R{n}$, the \emph{lifted set of affine functions} 
\textnormal{
$\Set{J}_\text{l} = \frak{L_J}(\Set{J})$ 
}
    in 
$\R{l}$,  $l = (n^2 + 3n)/2$, is defined as 

\textnormal{
\begin{equation}
\forall i \in   \{1,...,  |\Set{J}|\} : \quad
 \Set{J}_\text{l}[i]:  \R{l}\rightarrow \R{} \ ,  \quad 
  \Set{J}_\text{l}[i](y)   =  D_i^Ty + E_i \qquad,  
\end{equation}
}
where \begin{align}
D_{i} & = [B_{i,1}, B_{i,2}, ..., B_{i,n}, A_{i,11}, 2A_{i,12},  ... , 2A_{i,1n}, \nonumber\\
    & \qquad \qquad    A_{i,22}, 2A_{i,23},... ,2A_{i,2n}, ... , A_{i,nn}]^T \quad ,\label{eq:al}\\
     E_{i} & = C_i \quad , \label{eq:bl}
\end{align}
are the rearranged  parameters of the quadratic functions $\Set{J}[i](x)   =  x^TA_ix + B_i^Tx + C_i$. 
\end{def1}

\subsection{Properties of lifted mp-MIQP solutions}
\label{sec:proplift}
The following results show the equivalence of piecewise quadratic functions and the corresponding lifted piecewise affine functions.
\begin{lem1}
\label{lem:pp}
Given a polyhedral set $\Set{P}$ in $\R{n}$, the lifting transformation $L(\cdot)$,  and the lifted polyhedral set 
\textnormal{
$\Set{P}_\text{l} = \frak{L_P}(\Set{P})$}, it holds that
\textnormal{
\begin{equation} 
\forall x \in \R{n}:\quad \mathcal{I}_\Set{P}(x) = \mathcal{I}_{\Set{P}_\text{l}}(L(x)) \qquad .
\label{eq:IIl}
\end{equation}
}
\end{lem1}

\begin{proof}
\begin{align}
\forall x \in  \R{n} , \ \forall i \in & \{ 1, ...,|\Set{P}|\} : \nonumber\\
 i \in\mathcal{I}_\Set{P}(x)
& \leftrightarrow 
   x\in \Set{P}[i] \leftrightarrow
   H_ix\leq K_i \nonumber\\
& \leftrightarrow
  [H_i, \mathbf{0}]
  [x^T, x_1^2 , x_1x_2, ..., x_n^2]^T \leq K_i \nonumber\\
& \leftrightarrow
  L(x)\in \Set{P}_\text{l}[i] \nonumber\\
& \leftrightarrow 
  i \in\mathcal{I}_{\Set{P}_\text{l}}(L(x)) \nonumber
\end{align}
\end{proof}

\begin{lem1}
\label{lem:jj}
Given a set of quadratic functions $\Set{J}$ in $\R{n}$, the lifting transformation $L(\cdot)$, and the lifted set of affine functions 
\textnormal{
$\Set{J}_\text{l} = \frak{L_{J}}(\Set{J})$}, it holds that
\textnormal{
\begin{equation} 
\forall x \in \R{n}, \ \forall i \in \{1,...,|\Set{J}|\} :\quad 
\Set{J}[i](x) = \Set{J}_\text{l}[i](L(x)) \qquad .
\label{eq:JJl}
\end{equation}
}
\end{lem1}

\begin{proof}
\begin{align}
\forall x \in \R{n} , \ \forall i & \in \{1,...,|\Set{P}|\} : \nonumber\\
 \Set{J}[i](x) & =  x^TA_ix + B_i^Tx + C_i  \nonumber\\
               & = B_{i,1}x_1 + ... + B_{i,n}x_n \nonumber\\
               & \quad + A_{i,11}x_1^2 + 2A_{i,12}x_1x_2 + ... + 2A_{i,1n}x_1x_n \nonumber\\
               & \quad + A_{i,22}x_2^2+ 2A_{i,23}x_2x_3 + ... + 2A_{i,2n}x_2x_n \nonumber\\
               & \quad + ... + A_{i,nn}x_n^2 + C_i \nonumber \\
               & =  D_i^T L(x) + E_i \nonumber\\
               & = \Set{J}_\text{l}[i](L(x)) \nonumber
\end{align}
\end{proof}

\begin{theorem}
Given a piecewise quadratic function $J_{\Set{P},\Set{J}}(\cdot)$ in $\R{n}$, 
      a lifting transformation $L(\cdot)$ and the lifted sets 
\textnormal{
     $\Set{P}_\text{l} = \frak{L_{P}}(\Set{P})$}
     and 
\textnormal{
     $\Set{J}_\text{l} = \frak{L_{J}}(\Set{J})$}, 
     the piecewise affine function 
\textnormal{
     $J_{\Set{P}_\text{l},\Set{J}_\text{l}}(\cdot)$ } 
     satisfies 

\textnormal{
\begin{equation} 
\forall x \in \R{n} :\quad 
J_{\Set{P},\Set{J}}(x) = J_{\Set{P}_\text{l},\Set{J}_\text{l}}(L(x)) \qquad .
\label{eq:pwJJl}
\end{equation}}
\end{theorem}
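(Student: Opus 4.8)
The plan is to reduce the statement to the two preceding lemmas through a direct substitution, since no new structural work is required. I would start from the right-hand side of \eref{pwJJl}: by the definition \eref{vf} of a piecewise affine function applied to the lifted data, $J_{\Set{P}_\text{l},\Set{J}_\text{l}}(L(x))$ is the minimum of the terms $\Set{J}_\text{l}[i](L(x))$ taken over the index set $\mathcal{I}_{\Set{P}_\text{l}}(L(x))$.

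The key point is that both the index set over which the minimum is taken and the value of each individual term coincide with their counterparts in the original piecewise quadratic formulation. Lemma \ref{lem:pp} supplies the first: $\mathcal{I}_{\Set{P}_\text{l}}(L(x)) = \mathcal{I}_\Set{P}(x)$, so the two minimizations range over the identical collection of indices. Lemma \ref{lem:jj} supplies the second: $\Set{J}_\text{l}[i](L(x)) = \Set{J}[i](x)$ for every admissible $i$. Substituting these equalities turns the minimum into $\min_{i \in \mathcal{I}_\Set{P}(x)} \Set{J}[i](x)$, which is precisely $J_{\Set{P},\Set{J}}(x)$ by \eref{vf}. Since $x \in \R{n}$ is arbitrary, this yields \eref{pwJJl}.

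I do not expect a genuine obstacle here, since the two lemmas have already done the structural work of aligning the lifted and original descriptions. The only point that deserves a moment of justification is that both replacements may legitimately be carried through the $\min$ operator. This is immediate once one observes that the value of the minimum depends solely on the index set and on the terms it selects: because Lemma \ref{lem:pp} matches the index set exactly and Lemma \ref{lem:jj} matches each selected term exactly, the two minima agree term by term and are therefore equal. The same observation also covers the degenerate case where $x$ belongs to no polyhedron, as the two index sets are then simultaneously empty.
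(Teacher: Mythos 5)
Your proof is correct and is essentially the paper's own argument: the paper proves the theorem by the same chain of equalities, applying Lemma~\ref{lem:pp} to replace the index set $\mathcal{I}_\Set{P}(x)$ by $\mathcal{I}_{\Set{P}_\text{l}}(L(x))$ and Lemma~\ref{lem:jj} to replace each term $\Set{J}[i](x)$ by $\Set{J}_\text{l}[i](L(x))$ inside the minimum from \eref{vf}, merely written from the unlifted side to the lifted side rather than in your direction. Your added remark about the empty-index-set case and the legitimacy of substituting under the $\min$ is a harmless elaboration the paper leaves implicit.
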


\begin{proof}
\begin{align}
\forall x \in   \R{n}  : 
J_{\Set{P},\Set{J}}(x) & \stackrel{\hphantom{\text{\scriptsize{(Lemma \ref{lem:pp})}}}}{ =}  &&\min_{i\in\mathcal{I}_\Set{P}(x)}\Set{J}[i](x)   \nonumber\\
                      &  \stackrel{\text{\scriptsize{(Lemma \ref{lem:pp})}}}{ =}  && \min_{i\in\mathcal{I}_{\Set{P}_\text{l}}(L(x))}\Set{J}[i](x)  \nonumber\\
                       & \stackrel{\text{\scriptsize{(Lemma \ref{lem:jj})}}}{ =}  && \min_{i\in\mathcal{I}_{\Set{P}_\text{l}}(L(x))}\Set{J}_\text{l}[i](L(x)) \nonumber\\
                       &  \stackrel{\hphantom{\text{\scriptsize{(Lemma \ref{lem:pp})}}}}{ =} && J_{\Set{P}_\text{l},\Set{J}_\text{l}}(L(x))  \qquad .\nonumber
\end{align}
\end{proof}

\begin{rem1}
\label{rem:lift}
The construction of the lifted piecewise affine function 
\textnormal{
$J_{\Set{P}_\text{l},\Set{J}_\text{l}}(\cdot)$} is computationally inexpensive.
Both the polyhedral set 
\textnormal{
$\Set{P}_\text{l}$} and the set of affine functions 
\textnormal{
$\Set{J}_\text{l}$} require only a rearrangement of the data representing the original piecewise quadratic function $J_{\Set{P},\Set{J}}(\cdot)$.
\end{rem1}
\begin{rem1}
Since Theorem 1 states that value functions of mp-MIQP problems can be represented as equivalent piecewise affine functions  
\textnormal{
$J_{\Set{P}_\text{l},\Set{J}_\text{l}}(\cdot)$}, algorithms for efficient evaluation of mp-MILP solutions,  such as  \cite{kvasnica04:_multi_param_toolb_mpt,tondel03:_evaluat},  can now be directly applied. 
\end{rem1}

\section{Evaluation of mp-MIQP solutions}
\label{sec:evalsols}
The proposed offline procedure to prepare the evaluation of mp-MIQP solutions, 
    stated as problem (PL-MIQP) in Definition \ref{def:plmiqp}, 
    is summarized as \aref{prep}.
All lines of \aref{prep} use existing algorithms 
available in the Multi-Parametric Toolbox \cite{kvasnica04:_multi_param_toolb_mpt} in MATLAB, 
    except for the lifting operation in line two.
The algorithms and their complexity are discussed in the following four subsections. 
Subsection \ref{sec:onlineeval} then discusses the online evaluation and its complexity. 

\begin{algorithm}
\begin{algorithmic}[1]
    \caption{\textsc{PrepareEvaluation}($\Set{P}$, $\Set{J}$)}
\label{alg:prep}
\Require set of polyhedral partitions $\Set{P}$, 
set of quadratic functions $\Set{J}$
    \Ensure binary search tree $\mathcal{T}$
    \State  $ (\Set{P}_\text{r},\Set{J}_\text{r}) \gets   \textsc{Reduce}(\Set{P},\Set{J}) $ 
    \State  $ (\Set{P}_\text{l},\Set{J}_\text{l}) \gets   (\mathfrak{L_P}(\Set{P}_\text{r}),\mathfrak{L_J}(\Set{J}_\text{r}))$ 
    \State  $ (\PaP,\Set{J}_\text{m}) \gets   \textsc{Merge}(\Set{P}_\text{l},\Set{J}_\text{l})$ 
    \State  $ \mathcal{T} \gets   \textsc{Tree}(\PaP)$ 
\end{algorithmic}
\end{algorithm}

\subsection{Overlap reduction}
  A piecewise quadratic function $J_{\Set{P},\Set{J}}(\cdot)$ in $\R{n}$ may contain regions $\Set{P}[i]$ whose associated quadratic function value $\Set{J}[i](x)$ is \emph{never} 
  minimizing the expression in \eref{vf} 
  for any vector $x \in \Set{P}[i]$.  
  These regions can be identified and removed 
  using the algorithm in \cite{besselmann10:_const_optim_contr}, denoted by $\textsc{Reduce}$ in the first line of \aref{prep}. 
For  $N$ initial polyhedra in $\Set{P}$, $\textsc{Reduce}$  solves up to $N^2$ indefinite quadratic programs with $n$ variables to identify the reduced polyhedral set $\Set{P}_\text{r}$ and the associated quadratic functions $\Set{J}_\text{r}$.
The complexity can be reduced using several heuristics.
Since line three of \aref{prep} also removes redundant regions, the application of $\textsc{Reduce}$ could be omitted but serves as a preprocessing step to improve performance. 

\subsection{Lifting}
The second line of \aref{prep} applies the lifting operation  to the 
$|\Set{P}_\text{r}|$ 
polyhedra and quadratic functions of $(\Set{P}_\text{r},\Set{J}_\text{r})$, as defined in Definitions  \ref{def:liftp} and \ref{def:liftj}. 
  As pointed out in Remark \ref{rem:lift}, this is a formal rearrangement of the internal data representation and requires no additional computations.

\subsection{Merging}
The third line of \aref{prep} removes the region overlaps of the piecewise affine function $J_{\Set{P}_\text{l},\Set{J}_\text{l}}(\cdot)$ in $\R{l}$, defined by 
the lifted polyhedral set and the set of affine functions $(\Set{P}_\text{l},\Set{J}_\text{l})$. 
The algorithm $\textsc{Merge}$ is shown as pseudocode in \aref{remlin}, based on the implementation in \cite{kvasnica04:_multi_param_toolb_mpt}. 
It provides a single polyhedral partition $\PaP$ and a set of affine functions $\Set{J}_\text{m}$,  defining the equivalent piecewise affine function $J_{\PaP,\Set{J}_\text{m}}(\cdot)$ in $\R{l}$. 
Since the regions of $\PaP$ do not overlap, a function value comparison is no longer necessary and the original (PL-MIQP) reduces to a pure point location problem in the partition $\PaP$. 

\begin{algorithm}
\begin{algorithmic}[1]
    \caption{\textsc{Merge}($\Set{P}$, $\Set{J}$), \cite{kvasnica04:_multi_param_toolb_mpt}}
\label{alg:remlin}
\Require polyhedral set $\Set{P}$, 
    set of affine functions $\Set{J} $
    \Ensure polyhedral partition $\PaP$,  \newline 
    set of affine functions $\Set{J}_\text{m}$
\State $\PaP \gets \emptyset, \Set{J}_\text{m} \gets \emptyset$ 
\For{$i \in \{1,...,|\Set{P}|\}$ }
    \State $\Set{Q} \gets \emptyset$
    \For{$j \in \{1,...,|\Set{P}|\}, j \neq i$}
        \State $\Poly{Q} \gets \{x\in (\Set{P}[i]\cap \Set{P}[j]):  \Set{J}[i](x) \geq \Set{J}[j](x)\}$
        \If{$\Poly{Q}\neq \emptyset$}
        \State $\Set{Q} \gets [\Set{Q},\Poly{Q}]$
        \EndIf
    \EndFor
    \State $\mathcal{D} \gets \textsc{RegionDiff}(\Set{P}[i], \Set{Q})$
    \If{$\mathcal{D}\neq \emptyset$}
        \State $\PaP \gets [\PaP,\mathcal{D}]$
        \For{$k \in \{1,...,|\mathcal{D}|\}$ }
            \State $\Set{J}_\text{m} \gets [\Set{J}_\text{m},\Set{J}[i](\cdot)]$
        \EndFor
    \EndIf
\EndFor
\end{algorithmic}
\end{algorithm}

Essentially, \aref{remlin} 
loops through all polyhedra  $\Set{P}[i]$ and collects a polyhedral set $\Set{Q}$, covering the subset of $\Set{P}[i]$ where the function $\Set{J}[i](\cdot)$ \emph{is not the minimizer} 
of all overlapping functions in $\Set{J}$.
Consequently the set difference $\mathcal{D} = \Set{P}[i]\setminus \Set{Q}$ is the portion of $\Set{P}[i]$ where the function $\Set{J}[i](\cdot)$ \emph{is the minimizer} of all overlapping functions in $\Set{J}$.

The construction of  $\Set{Q}$ inside the two for-loops requires less than $|\Set{P}|^2$
feasibility checks of polyhedra. 
The central and most expensive part  of   {\aref{remlin}} is the set difference operation \textsc{RegionDiff}, called $|\Set{P}|$ times in line ten.
A pessimistic upper bound on the number of LPs solved by \textsc{RegionDiff} is given in \cite{Mato}. The bound is exponential in the dimension of $\Set{Q}$, with the total number of constraints of all polyhedra as base. 

The lifting of mp-MIQP solutions to $\R{l}$, as in Definition \ref{def:lift},  practically squares the problem dimension compared to mp-MILP solutions 
in the original parameter space $\R{n}$.
The exponential bound on the number of LPs solved by \textsc{RegionDiff} suggests that merging lifted mp-MIQP solutions is much more expensive than merging mp-MILP solutions 
with a similar polyhedral structure. 
However, numerical experiments  indicate that the total complexity of \aref{remlin} with affine or lifted quadratic functions is similar. 
The reason is that in the case with lifted quadratic functions, only very few constraints of $\Set{Q}$, originating from the $|\Set{Q}|$ function differences in line five of \aref{remlin}, actually spread along the bilinear dimensions. 
In other words, because of the special structure of the lifting procedure, only a small amount of additional complexity is introduced to \textsc{Merge} when applied to a lifted problem (PL-MIQP) compared to a problem (PL-MILP) with the same underlying polyhedral set.

\subsection{Search tree construction}
After the merging, the solution of a problem (PL-MIQP) reduces to a point location in a single polyhedral partition $\PaP$.
An efficient solution is   the construction of a binary search tree,  denoted by  
$\textsc{Tree}$
    in line four of {{\aref{prep}}}. 

An algorithm to construct a  binary tree using  the polyhedra's hyperplanes  as decision criteria is given in \cite{tondel03:_evaluat}.  The method uses heuristics to obtain a balanced tree. 
A central part of the algorithm is the  preprocessing step that determines the relative position of every polyhedron and each of the $n_\text{h}$  hyperplanes of the partition, solving up to $2n_\text{h}|\PaP|$ LPs.
Constructing a tree that is guaranteed to have minimum depth might require the solution of an MILP with up to $2|\PaP|$ binary variables for each node of the tree \cite{FucJon:2010:IFA_3625}.
The method can also be generalized to trees with more than two children \cite{partree}, which are particularly suitable for an implementation with multiple processors. 

Neither $n_\text{h}$ nor $|\PaP|$ are directly increased through the additional dimensions from the lifting.
In other words, only little additional complexity is introduced to \textsc{Tree} when solving  a lifted problem (PL-MIQP) instead of a problem (PL-MILP) with a similar underlying polyhedral set.

\subsection{Online evaluation}
 \label{sec:onlineeval}
After the preparation with \aref{prep}, the solution of a problem (PL-MIQP) with a polyhedral set $\Set{P}$, a set of quadratic functions $\Set{J}$ and a vector $x$ in  $\R{n}$ reduces to the evaluation of the binary tree $\mathcal{T}$.  
The evaluation is a sequence of vector multiplications \cite{tondel03:_evaluat} that needs to be applied to the lifted vector $y = L(x)$, defined in \eref{dimlift}. 
It is  denoted by 
\begin{equation}
i^* \gets \textsc{EvaluateTree}(\mathcal{T},y)
\label{eq:evaltree}
\end{equation}
and returns the index of the optimal region $\Set{P}[i^*]$. 
For mp-MIQP solutions, each region has an associated control law that can now be extracted. 
A balanced binary tree can execute point location queries in $\log_2(|\PaP|)$ tree node decisions, where $\PaP$ is denoting the polyhedral partition after the merging \cite{tondel03:_evaluat}. 

   It is of interest how the online evaluation complexity 
of mp-MIQP solutions compares with and without the preparation through the lifting and merging procedure in \aref{prep}. 
While \sref{exp} shows a numerical assessment with concrete examples, a basic comparison is obtained as follows. Consider a piecewise quadratic function $J_{\Set{P},\Set{J}}(\cdot)$ in $\R{n}$
defined over $n_\text{part}$ partitions,   with the same number of $m$ polyhedra in each partition.
Without \aref{prep}, each of the $n_\text{part}$ partitions is evaluated with a separate  search tree \cite{borrellimoraribook}.
        The total number of online operations for the tree evaluations then is 
\begin{equation}
N_{\text{ops},\text{no merging}} = n_\text{part}\cdot K_1\cdot\log_2(m)\qquad, 
\label{eq:nopsnomerge}
\end{equation}
where $K_1$, the number of arithmetic operations per tree node decision, grows linearly with the problem dimension $n$. Additionally,  $n_\text{part}$ operations are required to find the optimal partition. 
In comparison, using \aref{prep}, the fully merged partition $\PaP$ is evaluated with a single tree, requiring 
\begin{equation}
N_{\text{ops},\text{merging}} = K_2  \cdot \log_2(|\PaP|)\qquad
\label{eq:nopsmerge}
\end{equation}
operations. 
The factor $K_2$ is slightly larger than $K_1$,  depending on the ratio of tree decisions involving the lifted dimensions,  $n+1$ to $l$. 
It follows that a reduction of the online complexity through the lifting and merging is given whenever
\begin{align}
  N_{\text{ops},\text{merging}}\ & <\  N_{\text{ops},\text{no merging}} \qquad,\\ 
\leftrightarrow  \quad  |\PaP| \ & <\   m^{\frac{K_1\cdot n_\text{part}}{K_2}} \ \approx\ m^{n_\text{part}} \qquad. 
\end{align}
For mp-MIQP solutions to practical problem instances, one often obtains $|\PaP| \ll m^{n_\text{part}}$, leading to a significant improvement of the evaluation time when using the lifting and merging procedure. This is also confirmed by the examples in \sref{exp}.

\section{Partial merging of mp-MIQP solutions}
\label{sec:partialsols}

This section presents a modification of lines three and four in \aref{prep} that allows to choose a trade-off between offline and online complexity. It can be used for both mp-MIQP and mp-MILP solutions. 

\subsection{Pairwise partition merging}
\label{sec:scalemerge}
If the offline preparation using \aref{prep} can not be completed within the available offline computational time, it is still possible to improve the online evaluation time by partially merging the solution's partitions. 
To define the partial merging algorithm, the partition structure of mp-MIQP solutions is characterized using the following additional definitions.
In words, each polyhedron of the polyhedral set $\Set{P}$ is assigned to one of the partitions of the mp-MIQP solution.

  \begin{def1}  
    A \emph{partition index} $\Set{I}$ for $n_\text{\emph{part}}$ partitions, associated with a polyhedral set of $N$ elements,  
    is  given by
\begin{equation}
\Set{I} = \{s_1,s_2,...,s_N \}, \qquad s_i \in \{1,2, ...,n_\text{\emph{part}}\}\quad.
\end{equation}
  \end{def1}  
  \begin{def1}  
    The \emph{index set of the $k$'th polyhedral partition} for a  partition index $\Set{I}$ with $N$ elements   
    is  given by
\begin{equation}
\mathcal{I}_\Set{I}(k) = \{i\in \{1,2,...,N\}  :    \Set{I}[i] = k \} \quad .
\end{equation}
  \end{def1}  
  \begin{def1}  
The elements of a polyhedral set $\Set{P}$ and  a set of functions $\Set{J}$ corresponding to an  index $\mathcal{I} = \{i_1,i_2,...\}$ are denoted by 
$\Set{P}[\mathcal{I}] = \{\Set{P}[i_1],\Set{P}[i_2],...\}$ and $\Set{J}[\mathcal{I}] = \{\Set{J}[i_1](\cdot),\Set{J}[i_2](\cdot),...\}$.
  \end{def1}  

The partial merging algorithm \textsc{MergePairwise} is given in \aref{pair} and replaces line three in \aref{prep}. It runs $n_\text{m}$ iterations, each of which merges pairs of polyhedral partitions, using 
\textsc{Merge}, as defined in \aref{remlin}. 
The associated affine function set $\Set{J}$ can originate directly from an mp-MILP solution or from a lifted mp-MIQP solution. 
Line nine then assigns the number of the merged partition, $k$, to all elements of the corresponding new partition index.  
The complexity of each function call of \textsc{Merge} depends on the number of polyhedral constraints of its argument and grows with the number of iterations $n_\text{m}$, 
which can therefore be used to select the offline complexity of \aref{pair}.

The selection of the pairing through the partition index in line six of  \aref{pair} is arbitrary.  
It can be adjusted to consider generalized polyhedral subsets, as long as they cover the full polyhedral set $\Set{P}$.
A greedy heuristic to obtain a small polyhedral set $\Set{P}_\text{m}$ is to execute \textsc{MergePairwise} repeatedly with different permutations of the partition index $\Set{I}$, keeping the one that yields the smallest $|\Set{P}_\text{m}|$.

\begin{algorithm}
\begin{algorithmic}[1]
    \caption{\textsc{MergePairwise}($\Set{P}$, $\Set{J}$, $\Set{I}$, $n_\text{m}$)}
\label{alg:pair}
\Require polyhedral set $\Set{P}$, 
    set of affine functions $\Set{J}$, 
    partition index $\Set{I}$,
    number of merging iterations $n_\text{m}$
    \Ensure 
polyhedral set ${\Set{P}_\text{m}}$, 
set of affine functions ${\Set{J}_\text{m}}$, 
reduced partition index ${\Set{I}_\text{m}}$
    \State $(\Set{P}_\text{m}, \Set{J}_\text{m}, \Set{I}_\text{m})\gets (\Set{P}, \Set{J}, \Set{I})$
    \While{$n_\text{m}>0  $}
        \State $k \gets 0$ 
        \While{${k}< \max ({\Set{I}_\text{m}})/2$}
            \State $k\gets k+1  , \ \Set{I}_\text{k}\gets \emptyset$
            \State ${\mathcal{I}} \gets  [ \mathcal{I}_\Set{I}(2{k}-1) , \  \mathcal{I}_\Set{I}(2{k})]$
            \State  $ ({\PaP}_k,{\Set{J}}_k) \gets   \textsc{Merge}({\Set{P}_\text{m}[{\mathcal{I}}]},{\Set{J}_\text{m}[{\mathcal{I}}]})$ 
            \For{$i\in \{1,..., |\PaP_k|\}$}
                \State $\Set{I}_k[i] \gets k  $
            \EndFor
        \EndWhile
        \State ${\Set{P}_\text{m}} \gets [{\PaP}_1,..., {\PaP}_k ]$
        \State ${\Set{J}_\text{m}} \gets [{\Set{J}}_1,..., {\Set{J}}_k ]$
        \State ${\Set{I}_\text{m}} \gets [\Set{I}_1,..., \Set{I}_k ]$
        \State $n_\text{m}\gets n_\text{m}-1$
    \EndWhile
\end{algorithmic}
\end{algorithm}

After the partial merging of the partitions, the tree construction in line four  of \aref{prep} must be performed for each one of the remaining partitions.
The corresponding algorithm \textsc{MultiTree} is defined in \aref{multtree}. 
\begin{algorithm}
\begin{algorithmic}[1]
    \caption{\textsc{MultiTree}($\Set{P}$, $\Set{I}$)}
\label{alg:multtree}
\Require polyhedral set $\Set{P}$, 
    partition index $\Set{I}$, 
    \Ensure 
set of search trees ${\Set{T}}$, 
    \For{$i\in \{1,..., \max(\Set{I})\}$}
        \State $\mathcal{T}_i \gets \textsc{Tree}(\Set{P}[\mathcal{I}_\Set{I}(i)])$
    \EndFor
    \State $\Set{T} = [\mathcal{T}_1, \mathcal{T}_2,..., \mathcal{T}_{\max(\Set{I})}] $
\end{algorithmic}
\end{algorithm}

\subsection{Online evaluation}
     For partially merged mp-MIQP solutions, the optimal region is analogously to the traditional look-up methods determined using a standard two step procedure \cite{borrellimoraribook}, which is denoted by  \textsc{EvaluateMultiTrees} and shown as pseudocode in \aref{evalmult}. 
     After first evaluating the binary trees in line five  of \aref{evalmult},  the corresponding value function values are compared to determine the index of the optimal region. 
If the evaluation of the $n_\text{t} = |\Set{T}|$ trees  requires $N_{\text{ops},i}$ scalar operations for the $i$'th tree, 
   a total of 
     \begin{equation}
N_{\text{ops}} =  (l+1)n_\text{t} + \sum_{i=1}^{n_\text{t}} N_{\text{ops},i}
     \label{eq:nops}
     \end{equation}
operations are required for the online evaluation of the mp-MIQP solution. 
This includes additions, multiplications and comparisons.  
The first term in \eref{nops} accounts for the value function comparisons in line six of \aref{evalmult} in the  
$l$-dimensional lifted space.
The number $N_{\text{ops},i}$ is the maximum number of operations for a single execution of \textsc{EvaluateTree}.
It depends logarithmically on the size of the polyhedral partition and corresponds to the term $K_1\cdot\log_2(m)$ of the basic complexity estimate in  \eref{nopsnomerge}.

\begin{algorithm}
\begin{algorithmic}[1]
    \caption{\textsc{EvaluateMultiTrees}($\Set{T}$, $\Set{J}$, $\Set{I}$, $x$)}
\label{alg:evalmult}
\Require set of search trees $\Set{T} $, set of functions  $\Set{J}$,
    partition index $\Set{I}$, 
    vector  $x$
\Ensure optimal index $(i^*, j^*)$
    \State  $ y \gets L(x) $ 
    \State  $ J^* \gets \infty $ 
    \For{$i \in\{1,..., \max(\Set{I})\} $}
        \State{$ \Set{J}_i \gets \Set{J}[\mathcal{I}_\Set{I}[i]]$  }
        \State $ j \gets \textsc{EvaluateTree}(\Set{T}[i],y) $
        \If{$\Set{J}_i[j](x) < J^*$}
            \State $J^* \gets \Set{J}_i[j](x)$
            \State $(i^*,j^*) \gets (i,j)$
        \EndIf
    \EndFor
\end{algorithmic}
\end{algorithm}

\section{Numerical experiments}
\label{sec:exp}
In this section, the proposed approach to  evaluate mp-MIQP solutions is applied to  three example cases.
First, a simple artificial problem illustrates  \aref{prep}. 
Second, control of a simple PWA system shows the potential reduction of online complexity. 
Finally, the algorithm is applied to a recent approach for controlling DC-DC converters, showing the trade-off between online and offline complexity 
in an industrially relevant application.

\subsection{Illustrative 1D example}
\label{sec:epx1D}
This section illustrates the steps of  \aref{prep} using an artificial 1D-example of two overlapping polyhedra 
$\Set{P}=\{|x|\leq 2, |x|\leq 3\}$
with corresponding intersecting quadratic functions $\Set{J}=\{x^2 + 1, 2x^2\}$ , shown in \fref{s1}. 
After the lifting operation in line two of \aref{prep}, the polyhedra  $\Set{P}_\text{l}$
    and the functions $\Set{J}_\text{l}$ are defined over the space $\{x,x^2\}$, but still intersect (\fref{s3}).
The \textsc{Merge} operation in line three of \aref{prep} then provides a single partition with no overlaps 
and the corresponding piecewise affine function $\Set{J}_\text{m}$ (\fref{s5}).
It is now possible to build a binary search tree for the merged partition.
\begin{figure}
    \center
    \def\svgwidth{\figurescale\columnwidth} 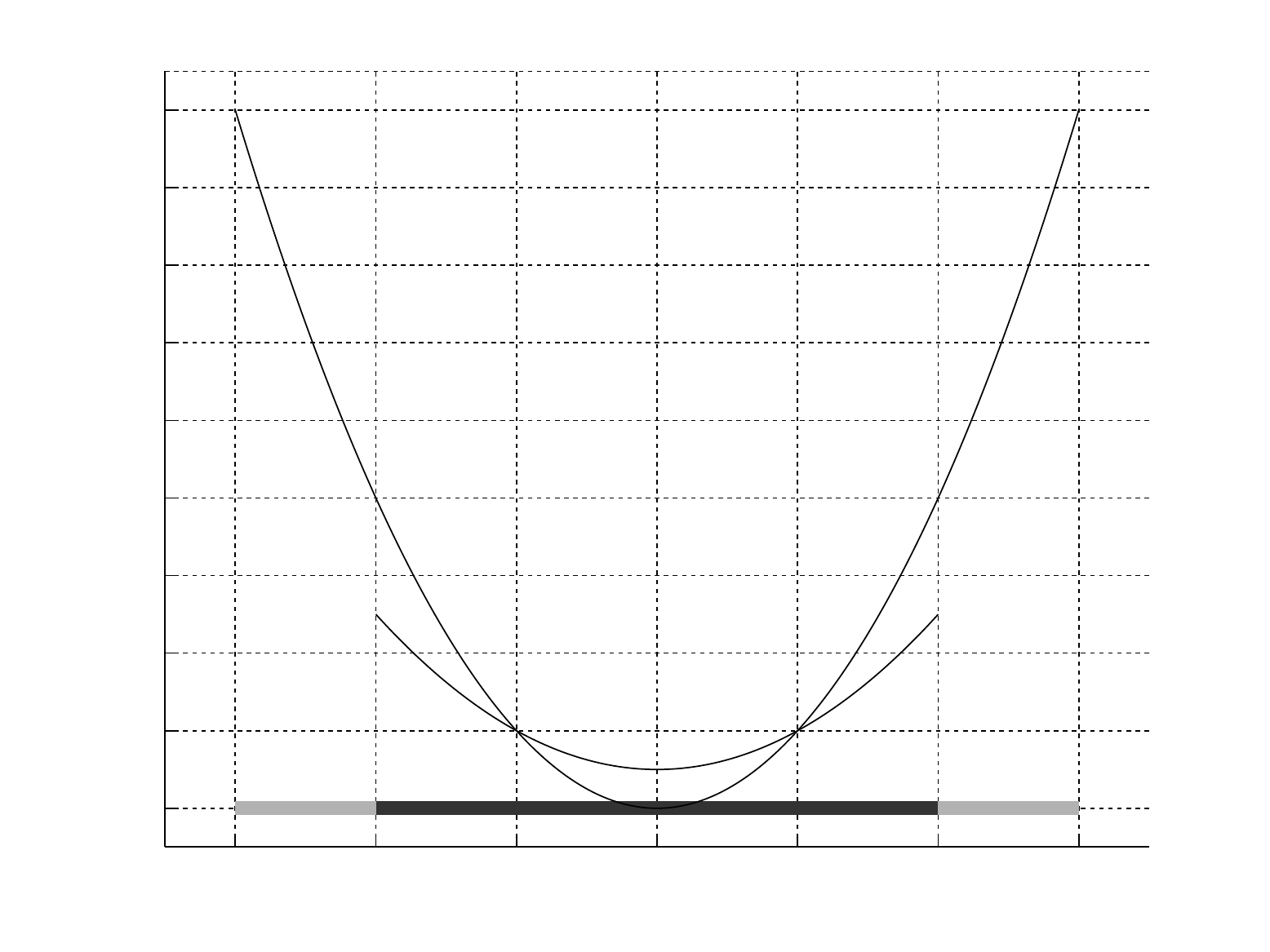
    \caption{Illustrative example: Two overlapping polyhedra $\Set{P}$ (bold lines) with corresponding
     intersecting quadratic functions $\Set{J}$ (thin curves). }
    \label{fig:s1}
\end{figure}
\begin{figure}
    \center
    \def\svgwidth{\figurescale\columnwidth} 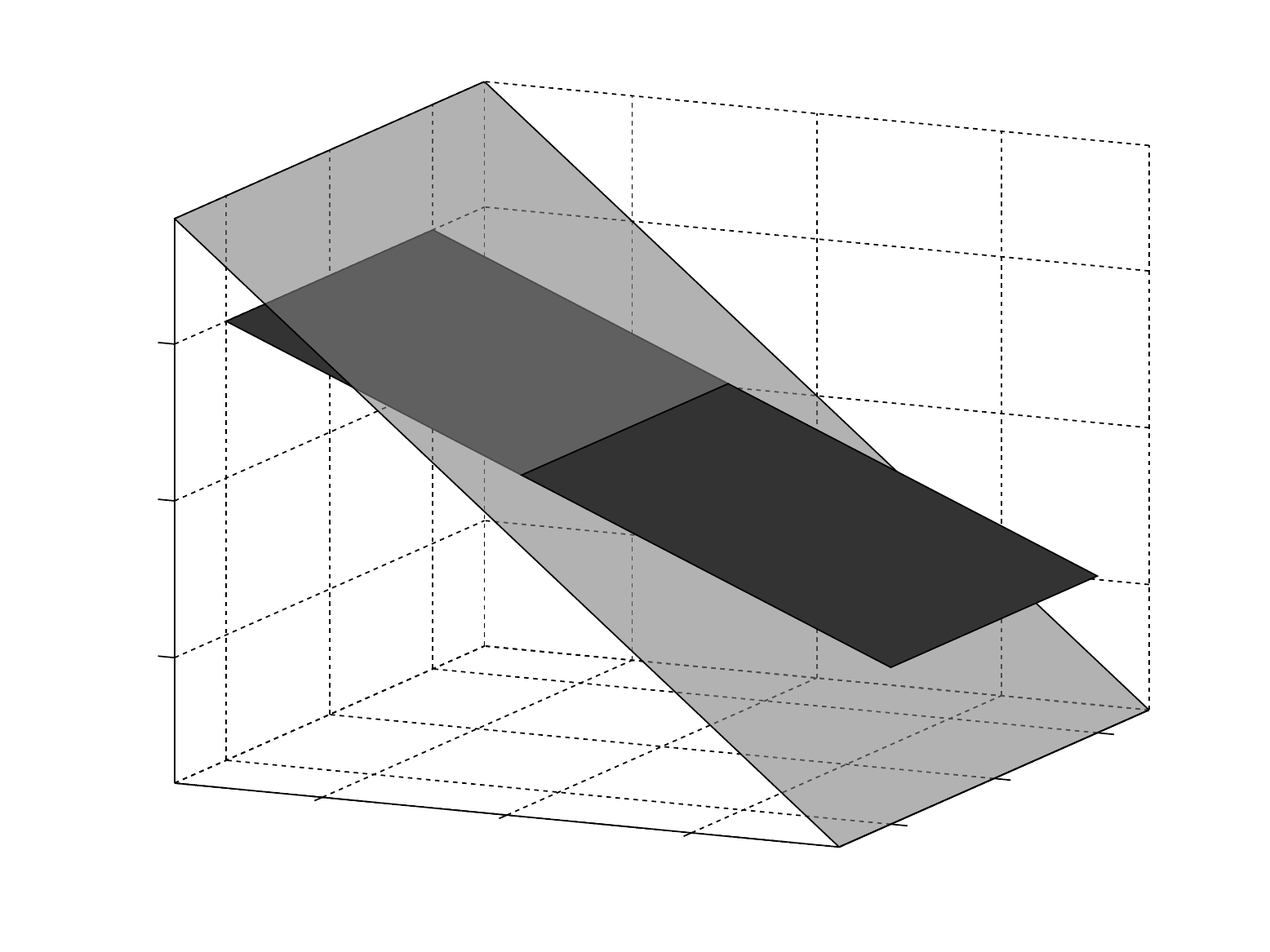
    \caption{Illustrative example: The lifted functions $\Set{J}_\text{l}$ are affine.}
    \label{fig:s3}
\end{figure}
\begin{figure}
    \center
    \def\svgwidth{\figurescale\columnwidth} 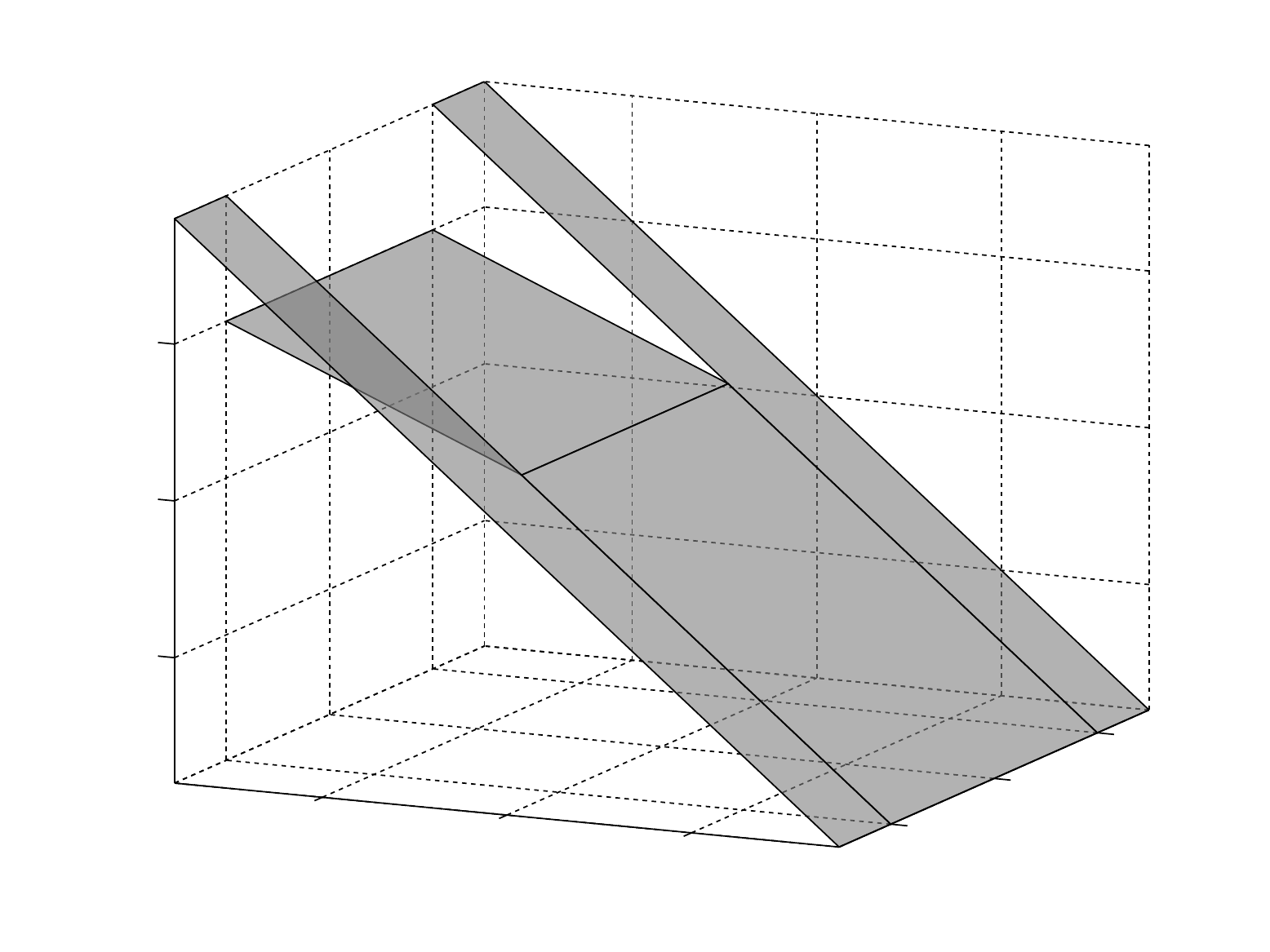
    \caption{Illustrative example: Functions $\Set{J}_\text{m}$ after merging.}
    \label{fig:s5}
\end{figure}

\subsection{2D PWA Example}

The PWA system given by equation (44) in  \cite{bemporad99:_contr_system_integ_logic_dynam_const} has two dynamic states, one input,  two different dynamics and box constraints on states and input.
The system is controlled using a finite horizon formulation with the penalty matrices 
\begin{equation}
Q = \left(\begin{array}{cc}1 &0 \\0 & 1  \end{array}\right)  \qquad
R = 1\quad .
\end{equation}

\fref{2dpwaNops} shows the total number of online operations $N_{ \text{ops}}$, defined in \eref{nops}, as a function of the prediction horizon $N$. 
Lifting and merging the solution reduces $N_{\text{ops}}$ by more than an order of magnitude. The factor increases with the prediction horizon.
In particular, the evaluation of the merged mp-MIQP solution with a prediction horizon $N=6$ is still faster than the traditional approach with $N=1$.
As shown in \tref{sol2Dpwa}, the merging operation increases the total number of polyhedra $n_\text{p}$, thereby also increasing the size of the resulting search tree.
  The number  floating point numbers stored in the search trees, $n_\text{store}$, determines the memory footprint of the control law. For $N=6$, the merging operation causes  $n_\text{store}$ to increase by a factor of six, compared to the traditional approach without merging.

\label{tab:sol2Dpwa}
\begin{figure}
    \center
    \def\svgwidth{\figurescale\columnwidth} 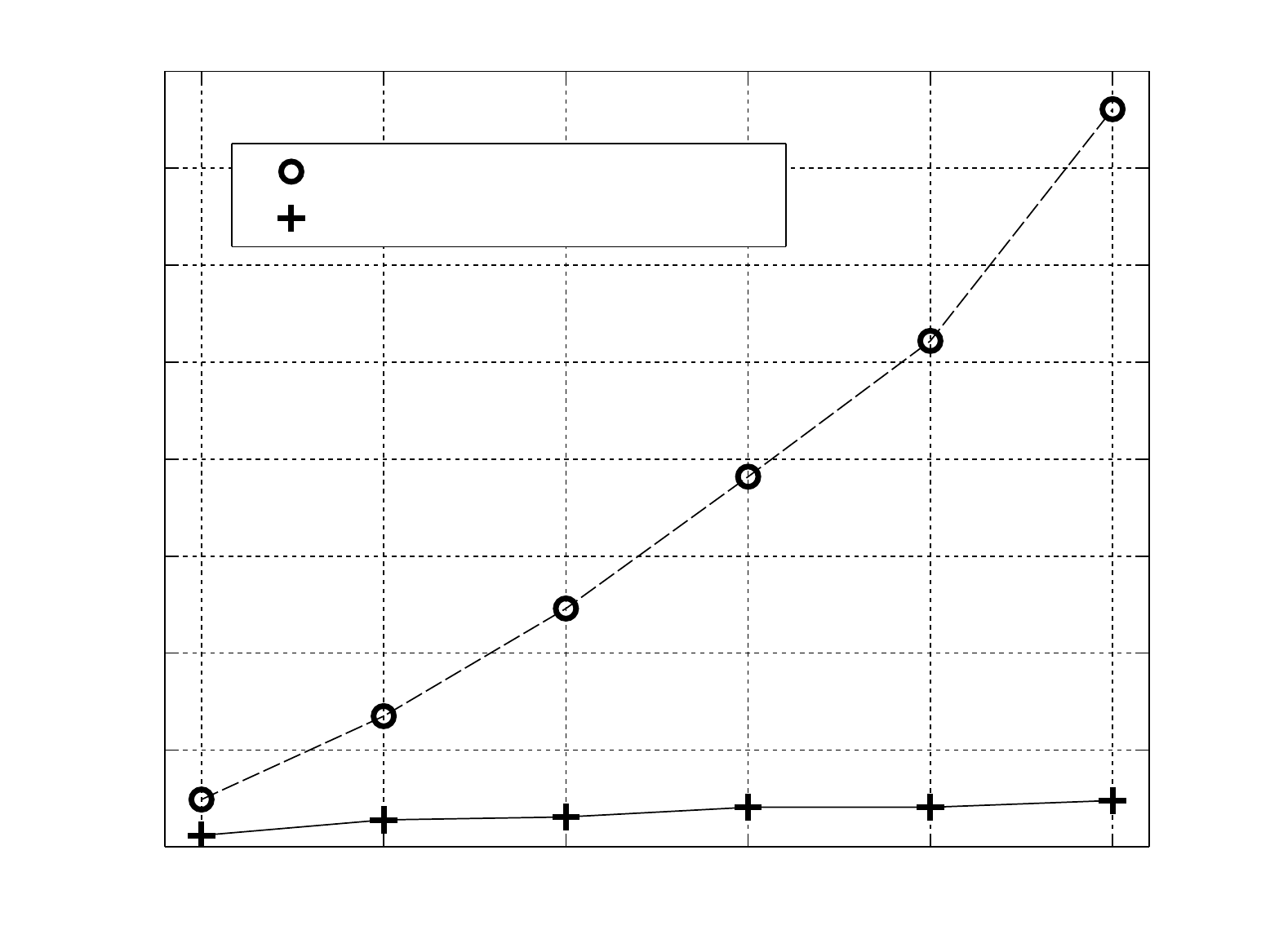
    \caption{2D PWA system: Number of online operations $N_{\text{ops}}$ with different predictions horizons $N$, using traditional approach (dashed) 
    and \aref{prep} (solid) .}
    \label{fig:2dpwaNops}
\end{figure}

\begin{table}[h]\centering
\begin{tabular}{ccccccc}
$N$ & 1 & 2 & 3& 4 & 5 & 6 \\ \hline\hline
\multicolumn{7}{c}{without merging }\\
 $n_\text{t}$& 4 & 8 & 14& 22 & 32 & 48\\ 
 $n_\text{p}$& 12 & 47 & 100& 168 & 221 & 322\\ 
 $n_\text{store}$ & 24 & 174 & 360 & 666 & 813 & 1248\\\hline
\multicolumn{7}{c}{complete merging $\quad$ ($n_\text{t}=1$) }\\
 $n_\text{p}$ & 12 & 87 & 208& 587 & 650 & 1560\\
 $n_\text{store}$ & 33 & 366  & 780 & 2718 & 2487 & 7395\\
 &&&&&&
\end{tabular}
\caption{2D PWA system: number of partitions \textnormal{$n_\text{t}$}, number of polyhedra \textnormal{$n_\text{p}$}, and number of floating point values stored in the tree(s) \textnormal{$n_\text{store}$}, for different prediction horizons $N$.}
\label{tab:sol2Dpwa}
\end{table}

\subsection{5D DC-DC converter example  }

A recent approach to the control of  DC-DC converters \cite{dcdc} uses 
a mixed logical dynamical system formulation to compute an explicit receding horizon control policy. The equivalent formulation as PWA system has five states, one input, three different dynamics and 
box constraints on states and inputs. In \cite{dcdc}, the system has been controlled using a 1-norm stage cost $||Qx ||_1$. 
The control approach is applied to the same system formulation, only changing the cost functions to the 2-norm stage cost $x^TQx + u^TRu $ with
\begin{equation}
Q = \text{diag}([4 \  0.1 \ 0 \ 0 \ 0 ])\quad ,  \qquad R = 0.001 \quad .
\end{equation}
In the lifted space, which has the dimension $l=20$, it is now possible to merge the overlapping partitions for an efficient implementation of the resulting control policy.

\fref{dcdcNops} shows the total number of online operations $N_{ \text{ops}}$ defined in \eref{nops}, 
    as a function of the number of merging iterations $n_\text{m}$.
For the shown cases, merging the solution in the lifted space reduces $N_{\text{ops}}$ up to a factor of seven compared to the traditional approach without merging.
The factor increases with the prediction horizon $N$ and  the number of merging iterations $n_\text{m}$.
In particular, the evaluation of the completely merged mp-MIQP solution with $N=4$ is faster than the traditional approach to evaluate a solution with $N=1$.

The offline effort for a prediction horizon $N=4$ and different number of merging iterations $n_\text{m}$ is summarized in  \tref{solDCDC}. For the preparation of the evaluation using a single search tree, 
a binary tree was constructed for a partition of 13821 20-dimensional polyhedra.
The time of the merging operation itself remained relatively small (about 15 minutes), compared to the time of the tree construction (about 36 hours), 
    both using a simple MATLAB implementation on a single core machine. 
This also confirms  that the lifting  does not render the merging problem intractable due to the increased number of dimensions.

\begin{figure}
    \center
    \def\svgwidth{\figurescale\columnwidth} 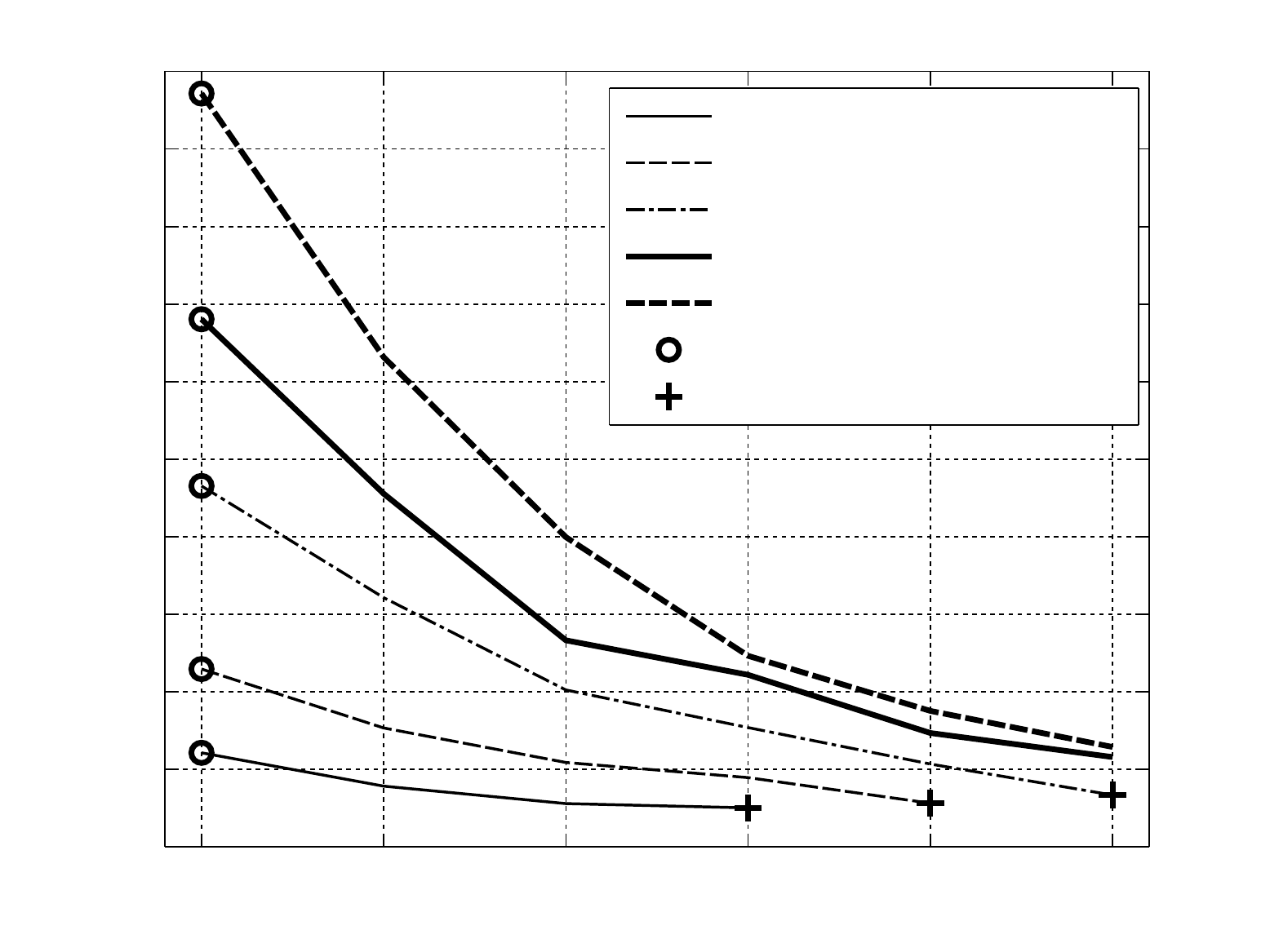
    \caption{DC-DC converter problem: Number of online operations $N_{\text{ops}}$ with $n_\text{m}$ merging recursions of \aref{pair} for different prediction horizons $N$.}
    \label{fig:dcdcNops}
\end{figure}

\begin{table}[h]\centering
\begin{tabular}{ccccccc}
$n_\text{m}$ & 0 & 1 & 2& 3 & 4 & 5 \\ \hline\hline
 $n_\text{t}$& 31 & 16 & 8& 4 & 2 & 1\\ 
 $n_\text{p}$& 105 & 104 & 111& 267 & 1170& 13821\\ 
 $n_\text{store}$ & 546 & 702 & 954 & 2766 & 12690 & 138402 \\
 $t_\text{merge}$ & 0 & 1 & 2& 10& 51& 906\\
 $t_\text{tree}$ & 8& 17& 32& 99& 810& 130000\\
 &&&&&&
\end{tabular}
\caption{DC-DC converter problem with prediction horizon $N=4$: number of partitions \textnormal{$n_\text{t}$}, number of polyhedra \textnormal{$n_\text{p}$}, number of floating point values stored in the tree(s) \textnormal{$n_\text{store}$},  
offline time to merge partitions \textnormal{$t_\text{merge}$ [seconds]}, offline time to build search trees \textnormal{$t_\text{tree}$ [seconds]}, 
 for different merging recursions \textnormal{$n_\text{m}$}.}
\label{tab:solDCDC}
\end{table}

\section{Conclusion}
\label{sec:conclusion}
  The evaluation of mp-MIQP solutions requires a comparison of potentially many overlapping piecewise quadratic value functions defined on polyhedral sets.
  In this paper it is shown how the quadratic functions and the associated polyhedra can be lifted to a higher dimensional parameter space.
 It is shown that  mp-MIQP  solutions  in this space have a representation as polyhedral piecewise affine function without overlaps. 
 For the online evaluation, this enables the use of efficient data structures known from {mp-MILP} problems, including binary search trees.
 Furthermore, an algorithm is presented that enables a trade-off between online and off\-line computational complexity both for mp-MILP and mp-MIQP problems.
  The numerical examples include a  power electronics control problem  
  of practical relevance. An online speedup up to an order of magnitude is achieved.

\bibliographystyle{IEEEtran}
\bibliography{references}

\end{document}